\documentclass[11pt]{amsart}
\usepackage{amssymb,verbatim,graphicx,fullpage}
\usepackage[all]{xy}

\theoremstyle{plain}
\newtheorem{theorem}{Theorem}

\newtheorem{proposition}{Proposition}

\theoremstyle{definition}

\newcommand{\RR}{\mathbb{R}}
\newcommand{\ZZ}{\mathbb{Z}}

\newcommand{\PLFp}{\mathsf{PLF_+}}

\begin{document}

\title[Orders on free groups]{Explicit left orders on free groups extending the lexicographic order on free monoids}

\author{Zoran \v{S}uni\'c}
\address{Department of Mathematics, Texas A\&M University, MS-3368, College Station, TX 77843-3368, USA}
\email{sunic@math.tamu.edu}

\begin{abstract}
For every finitely generated free group we construct an explicit left order extending the lexicographic order on the free monoid generated by the positive letters. The order is defined by a left, free action on the orbit of 0 of a free group of piecewise linear homeomorphisms of the line. The membership in the positive cone is decidable in linear time in the length of the input word. The positive cone forms a context-free language closed under word reversal.
\end{abstract}

\maketitle


A group $G$ is left orderable if there exists a linear order $\leq$ on $G$ that is compatible with the left multiplication, i.e., for all elements $f$, $g$ and $h$ in $G$, if $f \leq g$, then $hf \leq hg$. It has been known at least since the 1940's that the free group $F_k$ of rank $k$ is left orderable (and, in fact, bi-orderable, admitting order that is compatible with both the left and the right multiplication simultaneously). In two of his papers~\cite{neumann:ordered,neumann:ordered2} related to the subject Neumann mentions that, in addition to himself, several other authors have stated this fact in published or unpublished works, including Tarski, G.~Birkhoff, Shimbireva, and Iwasawa (despite his laudable effort to give credit to all, he was unaware of the simultaneous work of Vinogradov~\cite{vinogradov:ordered}). However, most of the early proofs are nonconstructive or too involved (often because of an attempt for greater generality).

Perhaps the most explicit, currently known, construction of an order on free groups is given by the Magnus-Bergman approach~\cite{bergman:ordered}, based on the Magnus embedding~\cite{magnus:free-embedding} of the free group $F_k=F(\Sigma_k)$ into the ring of formal power series with integral coefficients in noncommuting variables from $\Sigma_k=\{s_1,\dots,s_k\}$. The monomials over $\Sigma_k$ are ordered by short-lex and an element $u$ from $F_k$ is declared positive if and only if the coefficient in front of the smallest monomial (different from 1) in the power series representing $u$ under the Magnus embedding is positive.

For every finitely generated free group we construct an explicit left order extending the lexicographic order on the free monoid generated by the positive letters. The order is defined by a left, free action on the orbit of 0 of a free group of piecewise linear homeomorphisms of the line. The membership in the positive cone is decidable in linear time in the length of the input word by straightforward counting of subwords of length 2 of certain type (see the positivity criterion in Theorem~\ref{t:order-k}). The positive cone forms a context-free language closed under word reversal.

\subsection*{A free subgroup of $\PLFp(S^1)$ and its lift to $\PLFp(\RR)$}

Subdivide the circle $S^1=\RR/\ZZ=[0,1]/\{0=1\}$ into 7 arcs of the same length, denoted $1'=[0,1/7)$, $a'=[1/7,2/7)$, $b'=[2/7,3/7)$, $c'=[3/7,4/7)$, $C'=[4/7,5/7)$, $B'=[5/7,6/7)$, $A'=[6/7,1)$, and define three piecewise linear, orientation preserving homeomorphisms $a$, $b$, and $c$ of the circle (with finitely many breaks) as in the left half of Figure~\ref{f:f3circle}.
\begin{figure}[!ht]
\begin{tabular}{cc}
\includegraphics[width=225pt]{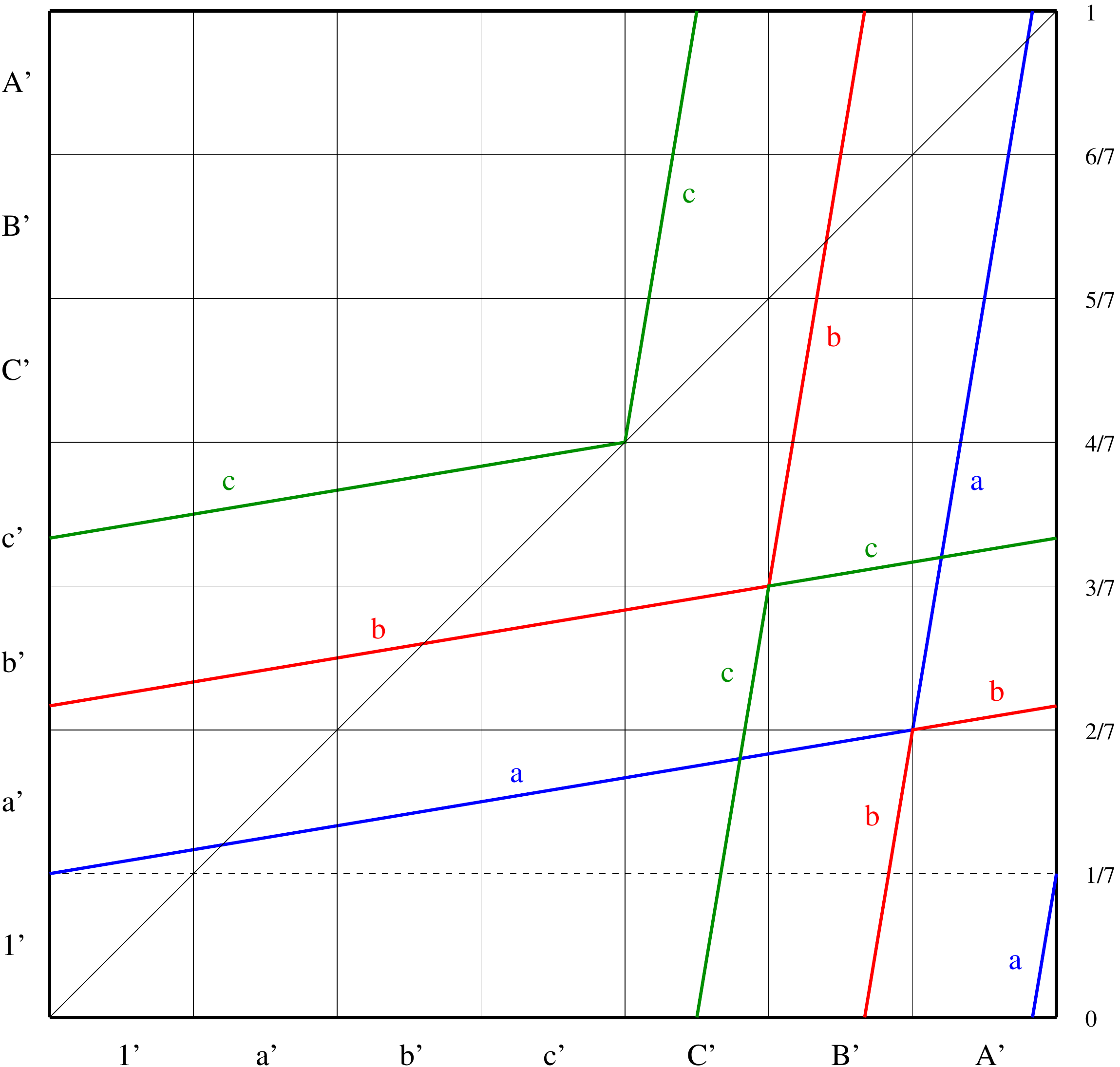} & \quad
\includegraphics[width=225pt]{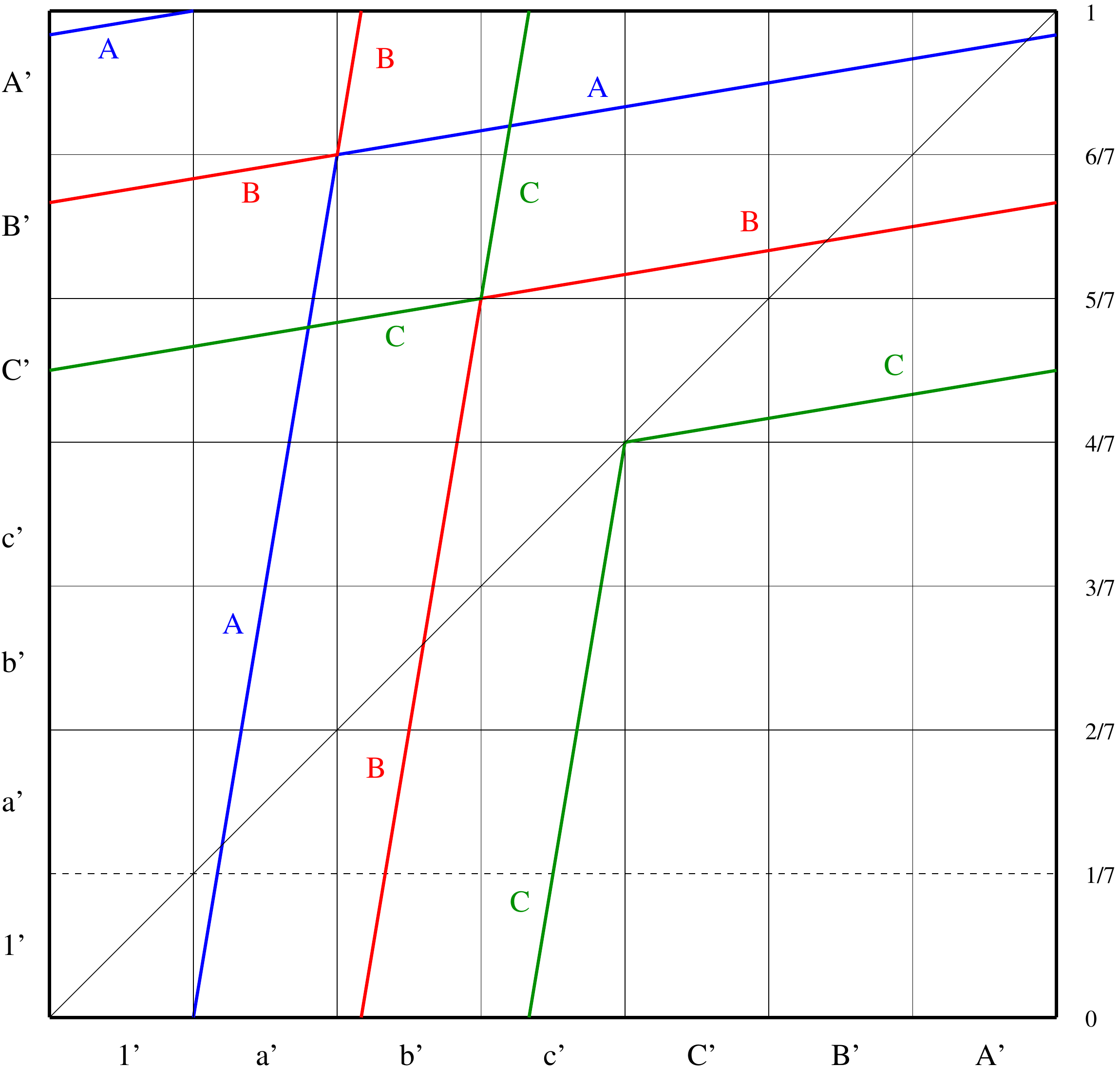}
\end{tabular}
\caption{Generators of $F$ in $\PLFp(S^1)$}
\label{f:f3circle}
\end{figure}
The inverses of $a$, $b$ and $c$ are denoted by $A$, $B$, and $C$, respectively, and are shown in the right half of Figure~\ref{f:f3circle}. It is evident that
\begin{alignat*}{6}
  &a(S^1 \setminus A') &&\subseteq a' \qquad \qquad
 &&b(S^1 \setminus B') &&\subseteq b' \qquad \qquad
 &&c(S^1 \setminus C') &&\subseteq c' \\
  &A(S^1 \setminus a') &&\subseteq A' \qquad \qquad
 &&B(S^1 \setminus b') &&\subseteq B' \qquad \qquad
 &&C(S^1 \setminus c') &&\subseteq C'
\end{alignat*}
Therefore, $F=\langle a,b,c\rangle$ is free of rank 3. Moreover, $F$ acts freely, through a left action, on the subset $F1'$ of the circle (note that $F1'$ has Lebesgue measure 1). In particular, it acts freely on the orbit of 0.

We lift the subintervals from the circle $S^1$ to subsets of the line $\RR$ along the projection $\RR \to \RR/\ZZ = S^1$, and we lift the maps $a$, $b$ and $c$ to piecewise linear, orientation preserving homeomorphisms of the line (with finitely many breaks on every compact subset) as in Figure~\ref{f:f3line}.
\begin{figure}[!ht]
\begin{tabular}{cc}
\includegraphics[width=225pt]{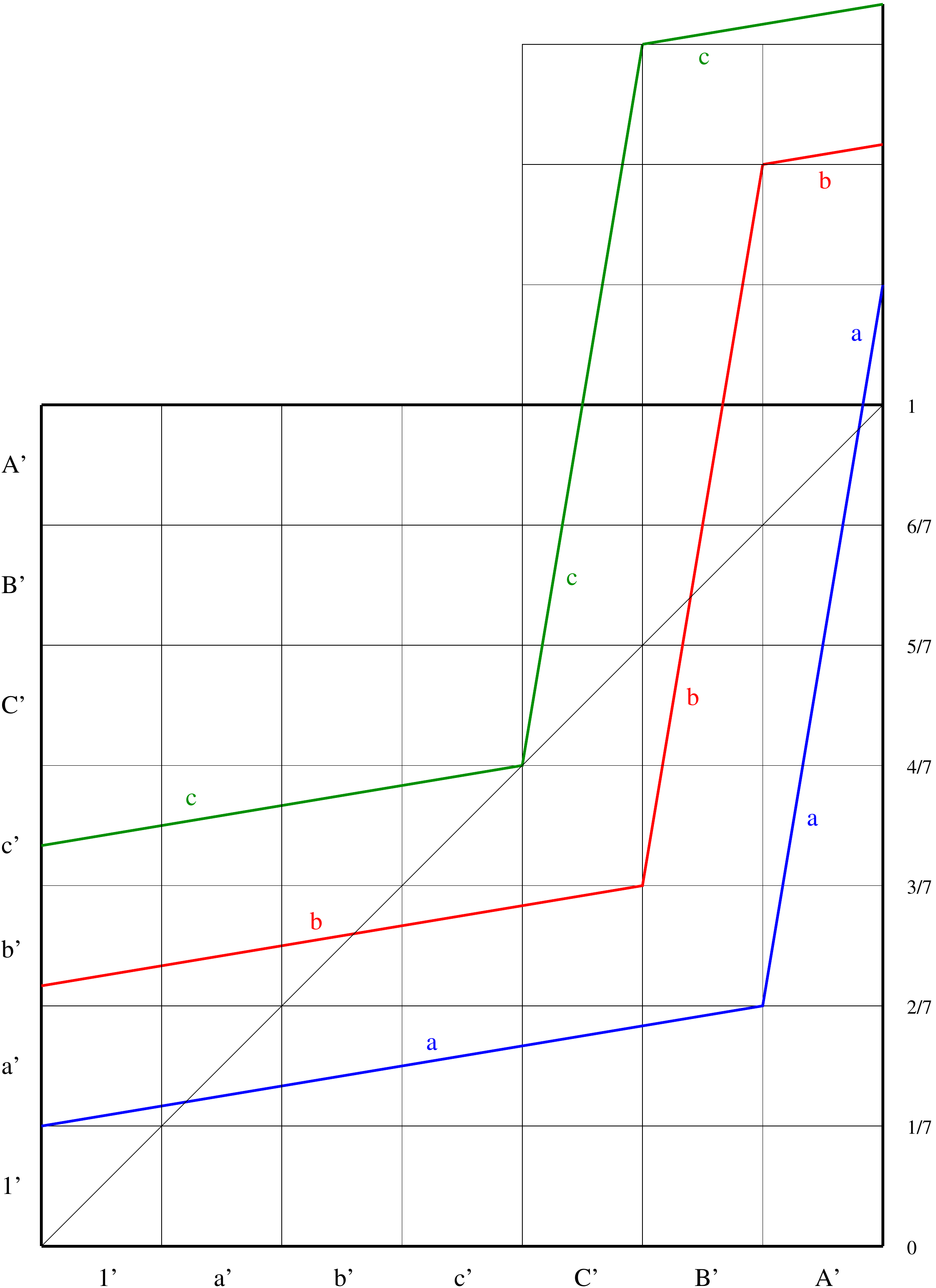} & \quad
\includegraphics[width=225pt]{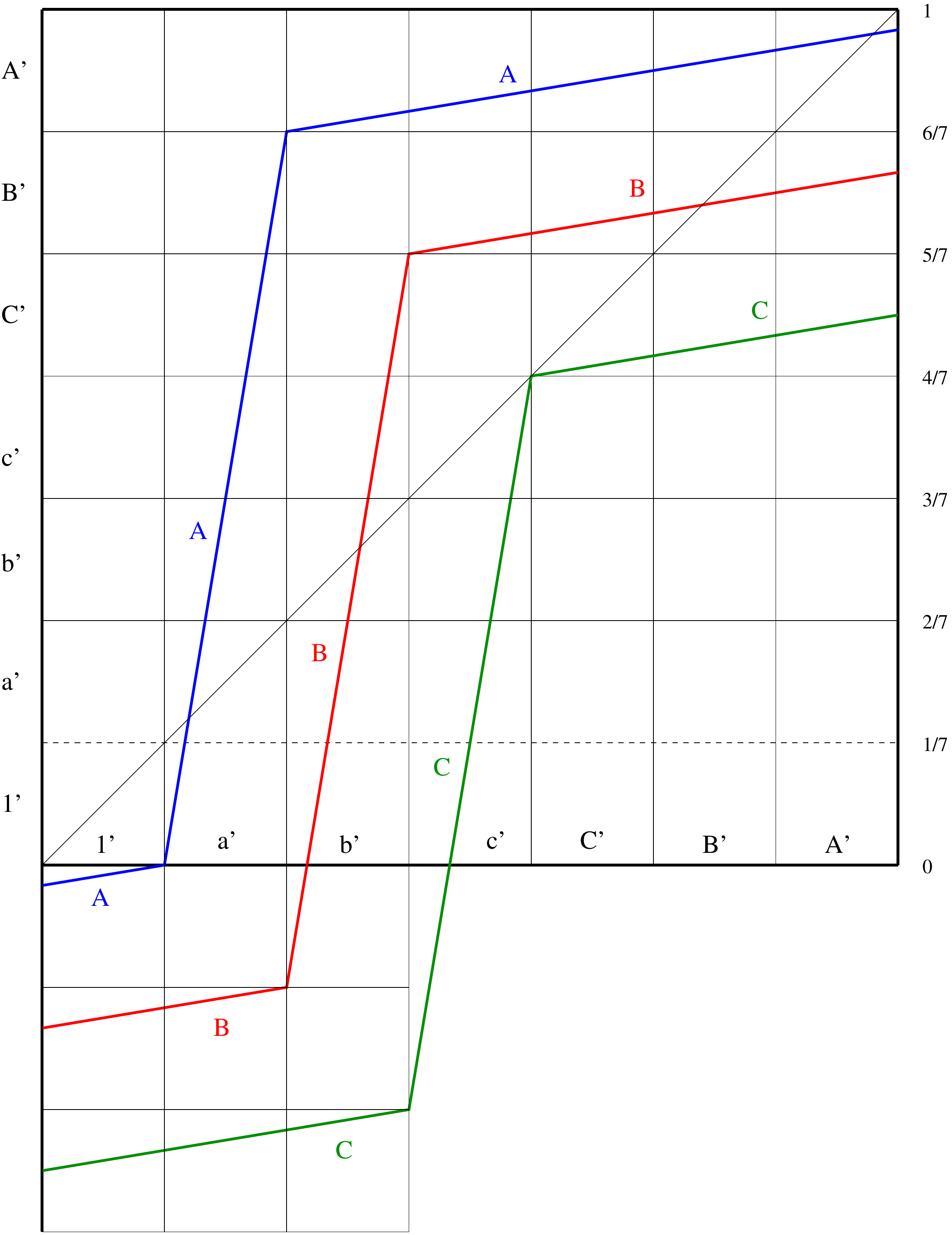}
\end{tabular}
\caption{Generators of $F$ in $\PLFp(\RR)$}
\label{f:f3line}
\end{figure}
We do not change the notation for the lifted subsets or homeomorphisms.


\subsection*{A left order on $F_3$}

Note that the lifted group $F$ is still free, acting freely on the orbit of 0. Therefore, a left order is defined on $F$ by declaring $g > h$ if and only if $g(0) > h(0)$ (equivalently, if and only if $h^{-1}g(0)>0$). A simple criterion for positivity is obtained by ``tracing'' the action on 0, i.e., by calculating, for reduced group words $u$, the signed distance, within error smaller than 1/2, between $u(0)$ and 0. Of course, since the action is given explicitly and the orbits of rational points are rational, $u(0)$ can be calculated exactly, but that takes longer, obscures the features of the order, and is not necessary.

\begin{proposition}\label{t:main}
Define a weight function $w:F \to \RR$ by
\begin{alignat*}{3}
 w(u) &= &&  &&\#(\textup{of subwords of }u\textup{ of the form } cB,~cA,\textup{ or }bA) - \\
      &  &&- &&\#(\textup{of subwords of }u\textup{ of the form } Cb,~Ca,\textup{ or }Ba) + \\
      &  &&+ &&\frac{1}{2}
        \begin{cases}
          1, &\textup{if } u \textup{ ends in a positive letter, i.e., one of the letters } a,b,c, \\
         -1, &\textup{if } u \textup{ ends in a negative letter, i.e., one of the letters } A,B,C, \\
          0, &\textup{if } u \textup{ is the trivial word}
        \end{cases},
\end{alignat*}
where $u$ is a reduced group word over $\{a,b,c\}$. Then
\[
 u(0)> 0 \qquad \text{ if and only if } \qquad w(u)>0.
\]
\end{proposition}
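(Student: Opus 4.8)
The plan is to reduce everything to the single metric estimate
\[
 |u(0) - w(u)| < \tfrac12 \qquad \text{for every reduced word } u,
\]
from which the Proposition follows at once. Indeed, every summand defining $w(u)$ is an integer except the terminal $\pm\tfrac12$, so $w(u)$ is a half-odd-integer; in particular $w(u)\neq 0$ and $|w(u)|\ge \tfrac12$ for $u\neq 1$. Writing $w(u)=k+\tfrac12$ with $k\in\ZZ$, the estimate places $u(0)$ in the open unit interval $(k,k+1)$. If $w(u)>0$ then $k\ge 0$, so $u(0)>k\ge 0$; if $w(u)<0$ then $k\le -1$, so $u(0)<k+1\le 0$. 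Since $w(1)=0=1(0)$ in the trivial case, this yields exactly $u(0)>0\iff w(u)>0$.

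To prove the estimate I would induct on the length of $u$ by reading it from the right, i.e., over the suffixes $v_i=x_ix_{i+1}\cdots x_n$, using $v_n=x_n$, $v_i=x_i\,v_{i+1}$, and $v_i(0)=x_i\bigl(v_{i+1}(0)\bigr)$. The invariant I would carry is twofold: (a) $v(0)\bmod 1$ lies in the attracting arc $x'$ of the first letter $x$ of $v$, so $x'\in\{a',b',c',A',B',C'\}$; and (b) $|v(0)-w(v)|<\tfrac12$. Part (a) is the ping-pong statement: since $v_{i+1}(0)\bmod 1\in (x_{i+1})'$ and $u$ is reduced (so $x_i\ne x_{i+1}^{-1}$, hence $(x_{i+1})'$ is \emph{not} the repelling arc of $x_i$), the containments displayed before Figure~\ref{f:f3line} force $x_i\bigl(v_{i+1}(0)\bigr)\bmod1\in (x_i)'$. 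The base case $v_n=x_n$ is read directly off Figure~\ref{f:f3line}: for a positive letter $x_n$ the point $x_n(0)$ lies in the copy of $(x_n)'$ inside $(0,1)$, and for a negative letter in the copy inside $(-1,0)$, matching $w(x_n)=\pm\tfrac12$ with $k=0$ or $k=-1$.

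For the inductive step the key structural fact is that the generators are lifts of circle homeomorphisms, so they commute with integer translation; hence the \emph{winding increment} produced by applying $x_i$ to a point of $(x_{i+1})'$ depends only on the pair $(x_i,x_{i+1})$, and the image of the whole lifted arc $(x_{i+1})'$ lands in a single period's copy of $(x_i)'$, which sits inside one unit interval with integer endpoints. Thus $v_i(0)$ again lies in such a unit interval, and (b) persists precisely when $w(v_i)-w(v_{i+1})$ equals this increment. The main work, and the main obstacle, is the finite but delicate verification, from the explicit maps in Figure~\ref{f:f3line}, that this increment equals exactly the coefficient assigned by $w$: namely $+1$ on the pairs $cB,cA,bA$, $-1$ on $Cb,Ca,Ba$, and $0$ on all remaining reduced pairs. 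By translation-equivariance this is a check over the at most $30$ reduced two-letter words ($6$ choices of $x_i$ times $5$ admissible $x_{i+1}$), and it is here that one must confirm the arcs are positioned so that the image falls in the correct period rather than an off-by-one neighbor, keeping the accumulated error strictly below $\tfrac12$.
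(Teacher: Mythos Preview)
Your proposal is correct and is essentially the paper's own argument, just phrased more formally: the paper also shows that $w(u)$ is the midpoint of the unit integer interval containing $u(0)$ by reading $u$ from the right, using the last letter to set the initial $\pm\tfrac12$, and then tracking ``jumps'' between consecutive unit intervals, which it identifies (via the ping-pong arcs and inspection of Figure~\ref{f:f3line}) with exactly the two-letter subwords $cB,cA,bA$ and $Cb,Ca,Ba$. Your explicit invocation of translation-equivariance and the inductive invariant make the bookkeeping cleaner, but the content is the same.
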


\begin{proof}
For any nontrivial word $u$, the point $u(0)$ is in exactly one of the intervals
\[ \dots,(-2,-1),(-1,0),(0,1),(1,2),\dots~. \]
We claim that $w(u)$ represents the midpoint of the interval to which $u(0)$ belongs. By definition, the action of $F$ is a left action, hence the last letter of $u$ acts first on 0 and pushes it into the interval (0,1) if the letter is positive, or to (-1,0) if the letter is negative, and the part of the formula for $w$ related to the last letter of $u$ records this as 1/2 or -1/2. From here on we trace the action of $u$ (from right to left) on the obtained point, but only record jumps into the next interval up (by adding 1 in our weight) and jumps into the next interval down (by subtracting 1 in our weight). Negative letters never produce jumps up (see the right half of Figure~\ref{f:f3line}) and positive letters never produce jumps down (see the left half of Figure~\ref{f:f3line}). By examining the action of the positive letters in the left half of Figure~\ref{f:f3line} we see that jumps up occur exactly when the letter $c$ is applied to a point in the regions $B'$ and $A'$, which is precisely when a subword of the form $cB$ or $cA$ occurs in $u$, or when the letter $b$ is applied to a point in the region $A'$, which is precisely when a subword of the form $bA$ occurs in $u$. We exclude the possibility of applying the letter $c$ to the region $C'$, $b$ to the region $B'$, and $a$ to the region $A'$, because $u$ is a reduced word. Similarly, by examining the action of the negative letters in the right half of Figure~\ref{f:f3line} we see that jumps down occur exactly when the letter $C$ is applied to a point in the regions $b'$ and $a'$, which is precisely when a subword of the form $Cb$ or $Ca$ occurs in $u$, or when the letter $B$ is applied to a point in the region $a'$, which is precisely when a subword of the form $Ba$ occurs in $u$. We exclude the possibility of applying the letter $C$ to the region $c'$, $B$ to the region $b'$, and $A$ to the region $a'$, because $u$ is a reduced word. We also exclude the possibility of applying any of the negative letters to the region $1'$, since the point 0 will never return to the region $1'$ under the action of a nontrivial word $u$, after, in the very first step, the last letter of $u$ moves it from there.
\end{proof}

We claim that the given order extends the usual lexicographic order on the free monoid $M_3=\{a,b,c\}^*$ based on $a<b<c$. All we need to verify is that, for all words $v_1,v_2,v_3$ in $M_3$, $e<av_1<bv_2<cv_3$, i.e., we need to verify that $w(av_1),w(v_1^{-1}Abv_2),w(v_2^{-1}Bcv_3)>0$. Since the weight of each of these words is 1/2, the claim is correct.

In fact, is is possible to see that the order, restricted to words in $M_3=\{a,b,c\}^*$ is lexicographic just by looking at the left half of Figure~\ref{f:f3line}. Namely, for $u$ in $M_3$, $u(0)$ is trapped in the interval $[0,4/7)$, and, for all words $v_1,v_2,v_3$ in $M_3$, we have $0<av_1(0)<bv_2(0)<cv_3(0)$, since, on the interval $[0,4/7)$ the entire graph of the function $a$ is above $0$ and below the minimum of the function $b$, and the entire graph of the function $b$ is below the minimum of the function $c$.


\subsection*{A left order on $F_k$}

Following an analogous construction (subdividing $S^1$ into $2k+1$ pieces, for $k \geq 2$, defining $k$ piecewise linear homeomorphisms $s_1,\dots,s_k$ of $S^1$ using only slopes $1/2k$ and $2k$, and so on) we may easily establish the following result.

\begin{theorem}\label{t:order-k}
Let $\Sigma_k=\{s_1,\dots,s_k\}$, for some $k \geq 2$. For $i=1,\dots,k$, denote $S_i=s_i^{-1}$. A left order on the free group $F_k =F(\Sigma_k)$ extending the lexicographic order on the free monoid $\Sigma_k^*$ based on the order $s_1<s_2<\dots<s_k$ on the alphabet of positive letters may be defined as follows. Define a weight map $w: F_k \to \RR$ by
\begin{alignat*}{3}
 w(u)
  &= && &&\#\{\textup{of subwords of }u \textup{ of the form }s_jS_i, \textup{ for } j>i\} - \\
  &  &&-&&\#\{\textup{of subwords of }u \textup{ of the form }S_js_i, \textup{ for } j>i\} + \\
  &  &&+&&\frac{1}{2}
    \begin{cases}
      1, &\textup{if }u \textup{ ends in any positive letter } s_i,~i=1,\dots,k, \\
     -1, &\textup{if }u \textup{ ends in any negative letter } S_i,~i=1,\dots,k, \\
      0, &\textup{if }u \textup{ is the trivial word}
    \end{cases},
\end{alignat*}
for a reduced word $u$ over $\Sigma_k$, and d eclare that the set
\[
 P_k = \{u \in F_k \mid w(u)>0 \}
\]
is the positive cone of $F_k$ (i.e., $u > v  \text{ in } F_k \Leftrightarrow w(v^{-1}u) > 0$).
\end{theorem}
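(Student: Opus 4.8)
The plan is to reproduce the construction and argument behind Proposition~\ref{t:main} essentially verbatim, only scaled from $7$ to $2k+1$ arcs and from $3$ to $k$ generators. First I would subdivide $S^1$ into $2k+1$ consecutive half-open arcs of length $L=1/(2k+1)$, read as $1',s_1',\dots,s_k',S_k',\dots,S_1'$ going around, so that $s_i'$ sits at position $i$ and $S_i'$ at the ``mirror'' position $2k+1-i$ (the analogue of $a'/A'$, $b'/B'$, $c'/C'$ in the rank-$3$ case). Then I would define orientation-preserving PL homeomorphisms $s_1,\dots,s_k$ using only the slopes $1/(2k)$ and $2k$, with $s_i$ of slope $2k$ on $S_i'$ and slope $1/(2k)$ elsewhere, which forces the nesting conditions
\[
 s_i(S^1 \setminus S_i') \subseteq s_i' \qqand S_i(S^1 \setminus s_i') \subseteq S_i', \qquad i=1,\dots,k.
\]
Since the $2k$ arcs $s_1',\dots,s_k',S_1',\dots,S_k'$ are pairwise disjoint, the table-tennis lemma applies and yields that $F_k=\langle s_1,\dots,s_k\rangle$ is free of rank $k$, acting freely on the orbit $F_k 1'$, in particular on the orbit of $0$. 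Lifting arcs and generators to $\RR$ along $\RR \to \RR/\ZZ$ exactly as in the excerpt, the rule $g>h \iff g(0)>h(0)$ defines a left order on $F_k$.

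The heart of the proof is the weight formula, which I would obtain by tracing the action of a reduced word $u$ on $0$ from right to left and recording only the crossings of integer points, just as in Proposition~\ref{t:main}. The single geometric fact driving everything is this: each lifted generator $\tilde s_j$ is an increasing degree-one homeomorphism, so on every period its graph meets the integer levels in exactly one crossing; and since $\tilde s_j(x)\in\ZZ$ translates on the circle to $s_j(\bar x)=0$, i.e. $\bar x = S_j(0)\in S_j'$, that single crossing lies inside the arc $S_j'$. Consequently, applying $s_j$ pushes a point up by exactly one interval precisely when the point lies cyclically beyond $S_j'$, that is, in one of the arcs $S_i'$ with $i<j$, and leaves the interval unchanged for every other allowed arc; the arc $S_j'$ itself is excluded because $u$ is reduced, and the point never lies in $1'$ after the first letter. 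The mirror map $x\mapsto -x$ (equivalently, passing to inverses) shows symmetrically that $S_j$ pushes a point down by exactly one interval precisely when the point lies in $s_i'$ with $i<j$. Because jumps are always by a single interval, positive letters never jump down and negative letters never jump up, the midpoint of the integer interval containing $u(0)$ is recorded exactly by
\[
 \#\{\, s_j S_i \text{ in } u : j>i \,\} - \#\{\, S_j s_i \text{ in } u : j>i \,\} \pm \tfrac12,
\]
which is $w(u)$; hence $u(0)>0 \iff w(u)>0$ and $P_k$ is the positive cone.

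Finally I would record that this order extends the lexicographic order on $\Sigma_k^*$ by the same trapping argument used for $F_3$: on the interval $[0,(k+1)L)$ the graphs of $s_1,\dots,s_k$ lie strictly above $0$ and in strictly increasing blocks, since each $s_i$ maps $[0,(k+1)L)$ into its arc $s_i'$ and these arcs are ordered $s_1'<\dots<s_k'$. Thus every positive word keeps $u(0)$ trapped in $[0,(k+1)L)$ with $0<s_1 v_1(0)<s_2 v_2(0)<\dots<s_k v_k(0)$ for all tails $v_1,\dots,v_k$; equivalently, the relevant comparison words $s_1 v_1$ and $v_i^{-1}S_i s_{i+1} v_{i+1}$ all have weight $\tfrac12>0$, which together with left-invariance and transitivity gives the full lexicographic order.

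The step I expect to be the main obstacle is pinning down the exact cyclic location of the integer-crossing of each $\tilde s_j$ — verifying that it falls inside $S_j'$ and that the resulting jump is always by exactly one interval — since this is precisely what converts the analytic picture into the clean combinatorial count of sign-changing length-two subwords. Everything else is a routine transcription of the rank-$3$ argument with $2k+1$ arcs in place of $7$.
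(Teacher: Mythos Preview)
Your proposal is correct and follows exactly the route the paper indicates: it reproduces the Proposition~\ref{t:main} construction and tracing argument with $2k+1$ arcs and slopes $1/(2k)$, $2k$ in place of $7$ arcs and slopes $1/6$, $6$, and then verifies the lexicographic extension by the same trapping/weight-$\tfrac12$ computation. The paper itself gives no more than the sentence ``Following an analogous construction \dots\ we may easily establish the following result'' together with the explicit formulas for $s_0$ and $s_i$, so your write-up is in fact more detailed than the paper's own treatment.
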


The $k$ homeomorphisms $s_1,\dots,s_k$ of $S^1$ needed in the construction may be defined as follows. Define the homeomorphism $s_0$ of $S^1$ by
\[
 s_0(x) =
  \begin{cases}
    \frac{1}{2k} x, & 0 \leq x < \frac{2k}{2k+1} \\
    2k x - (2k-1), & \frac{2k}{2k+1} \leq x < 1,
  \end{cases}
\]
and, for $i=1,\dots,k$, define
\[
 s_i = s_0 \left(x+\frac{i-1}{2k+1}\right) + \frac{i}{2k+1}.
\]

Note that the membership problem in the positive cone $P_k$ is rather easy and can be solved in linear time in the length of the input word. If we count the relevant subwords as we read, we can calculate the weight and tell if a word is in the positive cone by the time we finish reading the word.

It is known that the positive cone of a left order of a free group cannot be finitely generated as a monoid (this can be deduced from the work of McCleary~\cite{mccleary:not-fg}, but was the first explicit proof is given by Navas~\cite{navas:ordered-dynamics}), which means that it cannot be a regular language of the form $Y^*$ for some finite set $Y$ of group words. On the other hand, it is apparent that the positive cone $P_k$ is a context free language over $\Sigma_k^\pm$ (indeed, a push down automaton with a single stack can easily establish if the number of subwords of the form $s_jS_i$, with $j>i$ in a given word $u$ is greater than, smaller than, or equal to the number of subwords of the form $S_js_i$, with $j>i$, and can take into account the last letter of $u$ in case of a tie).

Another interesting feature of the positive cone $P_k$ is that it is closed under word reversal. Indeed, for a given word $u$, with $w(u)>0$, we have $w\left((u^R)^{-1}\right) = -w(u)<0$, where $u^R$ denotes the word reversal of $u$, since the transformation $(u^R)^{-1}$ just exchanges the positive and negative letters (while keeping them in the same order as in $u$), and the effect of this on the weight is to exactly exchange all positive and all negative contributions. Since $w\left((u^R)^{-1}\right)<0$, we must have $w(u^R)>0$.

The left order provided in Theorem~\ref{t:order-k} is not two-sided (unlike the Magnus-Bergman order, which is). In fact, it is clear that no order extending the lexicographic order can be two-sided (this is because $s_1<s_1s_1$, but $s_1s_2 > s_1s_1s_2$).

Variations of the construction presented above lead to other explicitly stated orders on free groups, not necessarily extending the lexicographic order (for instance, in case of $F_2$, add 1 to the weight for every subword of the form $ab$ or $aB$, subtract 1 for every subword of the form $BA$ or $Ba$, and add plus or minus 1/2 depending on the last letter). 

\def\cprime{$'$}


\end{document}